\documentclass[reqno,11pt]{amsart}
\usepackage{amsmath,amsthm,amscd,amssymb,graphicx,enumerate,latexsym}

\usepackage[raiselinks,colorlinks]{hyperref}
\hypersetup{citecolor=blue}

\numberwithin{equation}{section}

\theoremstyle{plain}
\newtheorem{thm}{Theorem}[section]
\newtheorem{lemma}[thm]{Lemma}
\newtheorem{cor}[thm]{Corollary}
\newtheorem{conj}[thm]{Conjecture}

\theoremstyle{definition}

\theoremstyle{remark}

\def\Re{\mathop{\rm Re}\nolimits}

\newcommand{\s}{\text{\rm{s}}}

\allowdisplaybreaks

\title{On a Conjecture for Higher-order Szeg\H o theorems}
\author{Milivoje Lukic}
\address{Rice University, 6100 Main Street, Mathematics MS 136, Houston, TX 77005}
\date\today
\email{milivoje.lukic@rice.edu}

\keywords{Szeg\H o theorem}
\subjclass[2010]{47B36,42C05,39A70}

\begin{document}

\begin{abstract}
We disprove a conjecture of Simon for higher-order Szeg\H o theorems for orthogonal polynomials on the unit circle and propose a modified version of the conjecture.
\end{abstract}

\maketitle
\section{Introduction}

In this paper we investigate probability measures $\mu$ supported on the unit circle $\partial\mathbb{D} = \{ e^{i\theta}\mid \theta \in [0,2\pi)\}$. If $\mu$ has infinite support, the sequence $1,z,z^2,\dots$ is linearly independent in $L^2(\partial\mathbb{D},d\mu)$, so Gram--Schmidt orthogonalization provides orthonormal polynomials $\varphi_n(z)$, which obey the recursion relation
\[
z\varphi_n(z)=\sqrt{1-\lvert \alpha_n \rvert^2} \varphi_{n+1}(z) + \bar \alpha_n \varphi_n^*(z)
\]
with $\varphi^*_n(z)=z^n \overline{\varphi_n(1/\bar z)}$ and  coefficients $\alpha_n \in \mathbb{D}$ called Verblunsky coefficients. Thus, to the measure $\mu$ there corresponds the sequence $\alpha = \{\alpha_n\}_{n=0}^\infty \in \mathbb{D}^\infty$. This is, by Verblunsky's theorem \cite{Verblunsky35}, a bijective correspondence; see \cite{OPUC1,OPUC2} for more information.

With respect to Lebesgue measure on $\partial\mathbb{D}$, $\mu$ can be decomposed into an absolutely continuous and a singular part,
\[
d\mu = w(\theta) \frac{d\theta}{2\pi} + d\mu_\s.
\]
The celebrated Szeg\H o theorem for the unit circle (due in this generality to Verblunsky~\cite{Verblunsky36})
 states that $\alpha \in \ell^2$ is equivalent to 
\begin{equation}\label{1.1}
\int \log w(\theta) \frac{d\theta}{2\pi} > - \infty.
\end{equation}
Note that,  since $\log w \le w -1$, \eqref{1.1} is equivalent to $\log w \in L^1(\partial \mathbb{D}, \frac{d\theta}{2\pi})$. This theorem has led the way for many related results on orthogonal polynomials and Schr\"odinger operators; see \cite{Rice} for a book-length treatment. In this paper we focus on higher-order Szeg\H o theorems, where \eqref{1.1} is replaced by a weaker condition. A conjecture from \cite{OPUC1} describes the situation with finitely many singularities. Denote by $S$ the shift operator on sequences,
\[
(S x)_n = x_{n+1}.
\]

\begin{conj}[{\cite[Section~2.8]{OPUC1}}]\label{C1.1}
Let $m_1, \dots, m_l \in \mathbb{N}$, and let $\theta_1, \dots, \theta_l$ be distinct elements of $[0,2\pi)$. Then
\begin{equation} \label{1.2}
\int \prod_{k=1}^l (1-\cos(\theta-\theta_k))^{m_k} \log w(\theta) \frac{d\theta}{2\pi} > -\infty 
\end{equation}
is equivalent to
\begin{equation}\label{1.3}
\prod_{k=1}^l (S-e^{-i\theta_k})^{m_k} \alpha \in \ell^2 \quad \text{and}\quad \alpha \in \ell^{2\max_k (m_k)+2}.
\end{equation}
\end{conj}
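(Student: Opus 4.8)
The plan is to reduce the whole statement to a single higher-order sum rule and then read off the equivalence. First I would rewrite the weight. Using $1-\cos(\theta-\theta_k)=\tfrac12\lvert e^{i\theta}-e^{i\theta_k}\rvert^2$, setting $M=\sum_{k=1}^l m_k$ and $P(z)=\prod_{k=1}^l(z-e^{i\theta_k})^{m_k}$, one has
$$\prod_{k=1}^l(1-\cos(\theta-\theta_k))^{m_k}=2^{-M}\prod_{k=1}^l\lvert e^{i\theta}-e^{i\theta_k}\rvert^{2m_k}=2^{-M}\lvert P(e^{i\theta})\rvert^2,$$
a nonnegative trigonometric polynomial vanishing to order $2m_k$ at each $\theta_k$. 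On the coefficient side, writing $\hat\alpha(\theta)=\sum_n\alpha_n e^{in\theta}$, the shift $S$ acts as multiplication by $e^{-i\theta}$, so Plancherel gives
$$\Bigl\lVert\prod_{k=1}^l(S-e^{-i\theta_k})^{m_k}\alpha\Bigr\rVert_{\ell^2}^2=2^{M}\int\prod_{k=1}^l(1-\cos(\theta-\theta_k))^{m_k}\,\lvert\hat\alpha(\theta)\rvert^2\,\frac{d\theta}{2\pi}.$$
This shows that the two sides of the conjecture already agree at leading order: to lowest order $-\log w$ behaves like $\lvert\hat\alpha\rvert^2$, exactly as in the classical case, where $\int\log w\,\tfrac{d\theta}{2\pi}=\sum_n\log(1-\lvert\alpha_n\rvert^2)\approx-\lVert\alpha\rVert_{\ell^2}^2$.

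Next I would construct the sum rule. In the Szeg\H o class, $\int\prod_k(1-\cos(\theta-\theta_k))^{m_k}\log w\,\tfrac{d\theta}{2\pi}$ is a finite linear combination of the Fourier coefficients $\widehat{\log w}(j)$, $\lvert j\rvert\le M$, and each such coefficient admits a Case-type expansion as a convergent series in the Verblunsky coefficients. Regrouping this combination, I expect an identity of the schematic form
$$\int\prod_{k=1}^l(1-\cos(\theta-\theta_k))^{m_k}\log w\,\frac{d\theta}{2\pi}=-c\,\Bigl\lVert\prod_{k=1}^l(S-e^{-i\theta_k})^{m_k}\alpha\Bigr\rVert_{\ell^2}^2+\mathcal{E}(\alpha),$$
with an explicit constant $c>0$, where $\mathcal{E}(\alpha)$ collects all terms of degree $\ge 3$ in $\alpha$. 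Because the conjecture concerns measures outside the Szeg\H o class, the individual $\widehat{\log w}(j)$ need not be finite, so instead of manipulating them directly I would derive the identity from the step-by-step (telescoping) sum rule --- valid with no a priori assumption on $\mu$ --- and pass to the limit, in general as a semicontinuity inequality, with any contribution of the singular part of $\mu$ entering with a definite sign.

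With the sum rule in hand, the two directions become estimates on $\mathcal{E}(\alpha)$. For the implication from \eqref{1.3} to \eqref{1.2}, I would bound each monomial of $\mathcal{E}(\alpha)$ --- a product of shifted Verblunsky coefficients of total degree $\ge 3$ --- by H\"older's and Young's inequalities, using $\alpha\in\ell^{2\max_k(m_k)+2}$ (hence $\alpha\in\ell^p$ for the relevant large $p$) together with $\prod_k(S-e^{-i\theta_k})^{m_k}\alpha\in\ell^2$; the power $2\max_k(m_k)+2$ is exactly the threshold at which the borderline pure-power term generated by the most singular factor becomes summable. Finiteness of $\mathcal{E}(\alpha)$ then gives \eqref{1.2}. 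For the converse I would invoke upper semicontinuity of the entropy and lower semicontinuity of the coefficient functionals to extract, from finiteness of the left-hand side, both $\prod_k(S-e^{-i\theta_k})^{m_k}\alpha\in\ell^2$ (the quadratic term) and $\alpha\in\ell^{2\max_k(m_k)+2}$ (the pure-power term).

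The crux --- and the step where I expect real trouble --- is controlling $\mathcal{E}(\alpha)$ and, above all, checking that the two stated conditions are exactly, neither more nor less than, what finiteness of the right-hand side requires. Two features make me wary. When several singularities are present, $\mathcal{E}(\alpha)$ contains cross-terms coupling the distinct frequencies $\theta_k$, and such resonant interactions need not be tamed by the single scalar bound $\alpha\in\ell^{2\max_k(m_k)+2}$; one might instead need a finer, singularity-by-singularity condition, or the correct threshold for a product of factors might differ from the maximum over $k$ appropriate to a single critical point. I would therefore stress-test the equivalence on a carefully tuned $\alpha$ --- a superposition of slowly modulated oscillations at the frequencies $\theta_k$ --- chosen so that both conditions in \eqref{1.3} hold while a resonant cross-term in $\mathcal{E}(\alpha)$ is driven to diverge (or conversely). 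If such a sequence exists, the sum rule would force \eqref{1.2} and \eqref{1.3} apart, and the conjecture would have to be corrected precisely at the level of these higher-order interaction terms.
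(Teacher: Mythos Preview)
Your plan cannot succeed as a proof because the statement is false: the paper does not prove Conjecture~\ref{C1.1}, it disproves it. Your closing instinct---that the cross-terms in $\mathcal{E}(\alpha)$ might not be controlled by the pair of conditions in \eqref{1.3}---is exactly right, and the paper confirms it with an explicit counterexample rather than by attempting the forward direction you outline.

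Concretely, take $l=2$, $\theta_1=0$, $\theta_2=\pi$, $m_1=2$, $m_2=1$, so that \eqref{1.2} becomes $\int(1-\cos\theta)^2(1+\cos\theta)\log w(\theta)\,\tfrac{d\theta}{2\pi}>-\infty$. The paper carries out precisely the sum-rule computation you sketch: it expands this integral via the moments $w_0,\dots,w_3$ of $\log w$ and regroups the resulting Verblunsky series into a telescoping part plus terms $L_k,E_k,H_k,F_k,G_k$. Under $(S^2-1)\alpha\in\ell^2$ and $\alpha\in\ell^6$ all of these are summable \emph{except}
\[
G_k=-\tfrac{1}{32}\lvert \alpha_k-2\alpha_{k-1}+\alpha_{k-2}\rvert^4,
\]
so \eqref{1.2} is equivalent to $(S-1)^2\alpha\in\ell^4$. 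This is the flaw in your heuristic that ``$2\max_k(m_k)+2$ is exactly the threshold at which the borderline pure-power term becomes summable'': the dangerous higher-order term is not a pure power $\lvert\alpha_k\rvert^{2m_1+2}$ but a \emph{fourth} power of the \emph{second} difference, and neither $\alpha\in\ell^6$ nor $(S-1)^2(S+1)\alpha\in\ell^2$ forces it to be summable. The sequence $\alpha_n=\tfrac{1+(-1)^n}{3(n+1)^{1/4}}$ satisfies both conditions in \eqref{1.3} yet has $(S-1)^2\alpha\notin\ell^4$, whence the integral is $-\infty$.

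So your sum-rule framework is the right one and matches the paper's method, but the step ``bound each monomial of $\mathcal{E}(\alpha)$ by H\"older using $\alpha\in\ell^{2\max_k(m_k)+2}$'' fails: $G_k$ is a signed degree-four term that is not dominated by those two hypotheses. The paper's proposed fix (Conjecture~\ref{C1.2}) replaces the single $\ell^p$ condition by a decomposition $\alpha=\sum_k\beta^{(k)}$ with $\beta^{(k)}\in\ell^{2m_k+2}$, which in this case is equivalent to adding the missing condition $(S-1)^2\alpha\in\ell^4$.
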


The conjecture originated with Simon's proof \cite[Section~2.8]{OPUC1} for the case $\sum_{k=1}^l m_k = 1$; Simon--Zlato\v s \cite{SimonZlatos05} proved the case $\sum_{k=1}^l m_k = 2$, and Golinskii--Zlato\v s \cite{GolinskiiZlatos07} proved the equivalence under the assumption $\alpha \in \ell^4$. However, we will see that it is not true in general.

In Theorem~\ref{T2.1}, we show that \eqref{1.3} is equivalent to the existence of sequences $\beta^{(1)}, \dots, \beta^{(l)}$ such that
\begin{equation}\label{1.4}
\alpha = \beta^{(1)} +\dots + \beta^{(l)}
\end{equation}
and, for all $k$,
\begin{align}
(S-e^{-i\theta_k})^{m_k} \beta^{(k)} & \in \ell^2 \label{1.5} \\
\beta^{(k)} & \in \ell^{2\max_j (m_j)+2} \label{1.6}
\end{align}

We find this decomposition natural because for each critical point $e^{i\theta_k}$ in \eqref{1.2}, there is a sequence $\beta^{(k)}$ which corresponds to this critical point through \eqref{1.5}. However, the conjectured equivalence of \eqref{1.2} and \eqref{1.4}--\eqref{1.6} implies that the degree $m_k$ associated with $e^{i\theta_k}$ affects not only the conditions on the corresponding $\beta^{(k)}$, but all on the $\beta^{(j)}$ through the $\max$ in \eqref{1.6}. We find it more natural to replace \eqref{1.6} by
\begin{equation}\label{1.7}
\beta^{(k)} \in \ell^{2m_k + 2}.
\end{equation}
This suggests a modified version of the conjecture.

\begin{conj}\label{C1.2}
Let $m_1, \dots, m_l \in \mathbb{N}$, and let $\theta_1, \dots, \theta_l$ be distinct elements of $[0,2\pi)$. Then \eqref{1.2} is equivalent to the existence of sequences $\beta^{(1)}, \dots, \beta^{(l)}$ such that \eqref{1.4}, \eqref{1.5}, \eqref{1.7} hold.
\end{conj}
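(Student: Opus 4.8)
The plan is to establish the two implications of the claimed equivalence separately, treating the single-singularity case $l=1$ as the fundamental building block and then assembling the general case through the decomposition of Theorem~\ref{T2.1}. When $l=1$ the conditions \eqref{1.6} and \eqref{1.7} coincide, so for one singularity Conjecture~\ref{C1.2} reduces to Conjecture~\ref{C1.1}; the first task is therefore to prove the single-singularity statement
\[
\int (1-\cos(\theta-\theta_1))^{m_1}\log w(\theta)\,\frac{d\theta}{2\pi} > -\infty \iff (S-e^{-i\theta_1})^{m_1}\alpha \in \ell^2,\ \alpha \in \ell^{2m_1+2}
\]
for all $m_1$, extending the known $m_1\le 2$ results. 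For this I would seek a higher-order sum rule expressing the left-hand integral as a series in $\alpha$ whose convergence is controlled precisely by the two right-hand conditions, following the relative Szeg\H o function and step-by-step sum rule philosophy used by Simon and Simon--Zlato\v s, but organized so that the leading term is $\|(S-e^{-i\theta_1})^{m_1}\alpha\|_{\ell^2}^2$ while the error terms are dominated by $\|\alpha\|_{\ell^{2m_1+2}}^{2m_1+2}$.

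For the direction \eqref{1.4}--\eqref{1.5}, \eqref{1.7} $\Rightarrow$ \eqref{1.2}, I would exploit that the sequences $\beta^{(k)}$ are concentrated at distinct frequencies $\theta_k$. Granting the single-singularity statement and writing $w^{(k)}$ for the absolutely continuous weight of the measure whose Verblunsky coefficients are $\beta^{(k)}$, one gets $\int(1-\cos(\theta-\theta_k))^{m_k}\log w^{(k)}\,\frac{d\theta}{2\pi} > -\infty$. The point is that near $\theta_k$ only $\beta^{(k)}$ carries genuine frequency-$\theta_k$ content: for $j\ne k$ the condition \eqref{1.5} forces $\beta^{(j)}$ to oscillate at $\theta_j$, so $\beta^{(j)}$ produces no singularity of $w$ at $\theta_k$. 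I would try to make this rigorous through a perturbation and localization estimate showing that the singular behavior of $\log w$ at each $\theta_k$ is governed by $\beta^{(k)}$ alone, so that finiteness of \eqref{1.2} follows from the $l$ single-singularity statements together with the integrable contributions of the cross terms.

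The harder direction is \eqref{1.2} $\Rightarrow$ \eqref{1.4}--\eqref{1.5}, \eqref{1.7}, and this is where the main obstacle lies. One expects the sum-rule methods underlying the known cases to yield already the weaker conclusion that $\prod_k(S-e^{-i\theta_k})^{m_k}\alpha\in\ell^2$ and $\alpha\in\ell^{2\max_k m_k+2}$, hence by Theorem~\ref{T2.1} a decomposition satisfying \eqref{1.5} and the weaker \eqref{1.6}. The difficulty is to upgrade \eqref{1.6} to the sharp per-frequency bound \eqref{1.7}: the counterexample disproving Conjecture~\ref{C1.1} shows that finiteness of \eqref{1.2} is strictly stronger than \eqref{1.3}, so the required $\beta^{(k)}$ must be built directly from the finer information in \eqref{1.2} rather than from the decomposition furnished by Theorem~\ref{T2.1}.

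Because the map $\alpha\mapsto w$ is nonlinear and nonlocal, there is no transparent way to read off the frequency-$\theta_k$ part of $\alpha$ with the correct $\ell^{2m_k+2}$ integrability, and I expect this to be the principal source of difficulty. The crux I would aim for is a sharp frequency-localization estimate for the Verblunsky coefficients, quantifying how the strength of the singularity of $\log w$ at $\theta_k$, as measured by the exponent $m_k$, bounds the $\ell^{2m_k+2}$ norm of the component of $\alpha$ living at frequency $\theta_k$. Establishing such an estimate—morally a nonlinear, variable-exponent Hausdorff--Young-type inequality adapted to orthogonal polynomials on the unit circle—is the key step, and controlling the interaction between the $l$ distinct frequencies without losing the sharp exponents is the main obstacle I anticipate.
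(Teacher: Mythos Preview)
The statement you are trying to prove is Conjecture~\ref{C1.2}, and the paper contains no proof of it---it is presented as an open conjecture and, in fact, the closing paragraph of the introduction explicitly says the author is ``not confident that it is true, either,'' citing the possibility of singularities of $\log w$ appearing at points outside $\{e^{i\theta_k}\}$ when the $m_k$ are large. So there is nothing in the paper to compare your argument against; what you have written is a research program for an open problem, not a proof.

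Concretely, your plan rests on two pillars, both of which are open. First, the single-singularity case $(S-e^{-i\theta_1})^{m_1}\alpha\in\ell^2$, $\alpha\in\ell^{2m_1+2}$ $\iff$ \eqref{1.2} is only known for $m_1\le 2$; producing a sum rule whose error terms are controlled by $\|\alpha\|_{\ell^{2m_1+2}}^{2m_1+2}$ for general $m_1$ is already a substantial unsolved problem, and the paper's own computation for the modest case \eqref{1.8} shows how intricate the algebraic organization becomes. Second, your ``perturbation and localization'' step---arguing that the singular behavior of $\log w$ at $\theta_k$ is governed by $\beta^{(k)}$ alone---is exactly the nonlinear frequency-localization statement you later identify as the crux, and you offer no mechanism for it; the map from Verblunsky coefficients to $w$ does not respect additive decompositions, so knowing $\int(1-\cos(\theta-\theta_k))^{m_k}\log w^{(k)}>-\infty$ for each $k$ separately says nothing direct about $w$ for the sum $\alpha=\sum_k\beta^{(k)}$. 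In short, your proposal correctly locates the difficulties but does not resolve any of them, and the paper itself only establishes the very special case of Theorem~\ref{T1.3}.
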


Although this conjecture is incompatible with Conjecture~\ref{C1.1}, they coincide in all the cases which have been proved. However, in the very next case one would naturally proceed to verify, the conjectures differ. For the condition
\begin{equation}\label{1.8}
\int  (1-\cos\theta)^2 (1+\cos\theta) \log w(\theta) \frac{d\theta}{2\pi} > -\infty,
\end{equation}
Conjecture~\ref{C1.1} predicts necessary and sufficient conditions
\[
(S-1)^2(S+1)\alpha \in \ell^2, \quad \alpha\in \ell^6.
\]
Conjecture~\ref{C1.2} is stated in terms of $\beta$'s, but as we will explain in Section~\ref{S2}, it can be restated in terms of $\alpha$'s and predicts that \eqref{1.8} is equivalent to
\begin{equation} \label{1.9}
(S-1)^2(S+1) \alpha \in \ell^2, \quad \alpha \in \ell^6, \quad (S-1)^2\alpha\in \ell^4.
\end{equation}
Due to the prohibitive nature of the calculations involved, we only prove a special case.

\begin{thm}\label{T1.3}
Let $(S-1)(S+1)\alpha\in\ell^2$ and $\alpha \in \ell^6$. Then
\eqref{1.8} is equivalent to $ (S-1)^2 \alpha \in \ell^4$.
\end{thm}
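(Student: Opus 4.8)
The plan is to turn \eqref{1.8} into the convergence of an explicit series in the Verblunsky coefficients through a higher-order Szeg\H o sum rule, and then to show that under the standing hypotheses all but one term of that series is automatically finite, the exceptional term being a quartic equal to the assertion $(S-1)^2\alpha\in\ell^4$. Write $w=\lvert D\rvert^2$ for the Szeg\H o function $D$ and $\log D(z)=\tfrac12\hat L_0+\sum_{j\ge1}\hat L_j z^j$ for its Taylor expansion at the origin. Since $(1-\cos\theta)^2(1+\cos\theta)=\tfrac12-\tfrac14\cos\theta-\tfrac12\cos2\theta+\tfrac14\cos3\theta$, the integral in \eqref{1.8} depends only on the first four coefficients, through
\[
\int (1-\cos\theta)^2(1+\cos\theta)\log w(\theta)\,\frac{d\theta}{2\pi}=\Re\Big[\tfrac12\hat L_0-\tfrac14\hat L_1-\tfrac12\hat L_2+\tfrac14\hat L_3\Big].
\]

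First I would compute these coefficients from the product expansion $\log D(z)=\tfrac12\hat L_0-\sum_k\log\bigl(1-\alpha_k b_k(z)\bigr)$, where $b_k(z)=z\varphi_k(z)/\varphi_k^*(z)$. Expanding each $b_k$ in powers of $z$ through order $z^3$ and collecting terms yields an identity
\[
\int (1-\cos\theta)^2(1+\cos\theta)\log w(\theta)\,\frac{d\theta}{2\pi}=B+\sum_n F_n,
\]
where $B$ is a fixed finite boundary term and each $F_n$ is a real polynomial in finitely many nearby $\alpha_j$, with homogeneous parts of degrees two through six. As $\log w$ need not be integrable, this is to be read in the sense that the left side is finite exactly when $\sum_n F_n$ converges; I would prove it first for $\alpha$ of finite support, where both sides are honest finite expressions, and extend to the general case by the standard lower semicontinuity of the entropy on the left.

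Next I would analyze $\sum_n F_n$ under $(S-1)(S+1)\alpha=(S^2-1)\alpha\in\ell^2$ and $\alpha\in\ell^6$. The degree-two part is a multiple of $\sum_n\lvert((S-1)^2(S+1)\alpha)_n\rvert^2$, which converges since $(S-1)^2(S+1)\alpha=(S-1)(S^2-1)\alpha\in\ell^2$; hence finiteness is not decided at quadratic order. The degree-six part is dominated by $\sum_n\lvert\alpha_n\rvert^6$ and converges because $\alpha\in\ell^6$. The heart of the matter is the odd-degree and quartic pieces: using summation by parts together with $(S^2-1)\alpha\in\ell^2$, which forces $\alpha_{n+2}-\alpha_n$ to be square-summable, I would rewrite each of them either as an absolutely convergent sum --- estimated by H\"older against the $\ell^2$ sequence $(S^2-1)\alpha$ and the $\ell^6$ sequence $\alpha$ --- or as a negative multiple of the single surviving quartic $\sum_n\lvert((S-1)^2\alpha)_n\rvert^4$. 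This gives
\[
\int (1-\cos\theta)^2(1+\cos\theta)\log w(\theta)\,\frac{d\theta}{2\pi}=R-c\sum_n\lvert((S-1)^2\alpha)_n\rvert^4,
\]
with $c>0$ and $R$ finite under the hypotheses, so that \eqref{1.8} holds if and only if $(S-1)^2\alpha\in\ell^4$. The second difference is natural here: for a decomposition $\alpha=\beta^{(1)}+\beta^{(2)}$ as in \eqref{1.4}--\eqref{1.5} one has $(S-1)^2\alpha\equiv4\beta^{(2)}$ modulo $\ell^2$, so $(S-1)^2\alpha\in\ell^4$ is exactly \eqref{1.7} with $m_2=1$, the condition attached to the singularity at $\pi$.

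The main obstacle is precisely the computation of the sum rule through degree six and the ensuing reorganization --- the prohibitive calculation referred to above. Two points demand care. First, one must verify that the decisive quartic really survives with $c\ne0$; this is what distinguishes Conjecture~\ref{C1.2} from Conjecture~\ref{C1.1}, which predicts no such term, so a coefficient or sign slip here would hide the entire phenomenon. Second, one must show that the competing contributions collapse: sums of type $\sum_n\lvert\alpha_n\rvert^4$ and $\sum_n\lvert\alpha_n\rvert^5$, which are not controlled by $\ell^6$ alone (note $\ell^6\not\subset\ell^4$), must be converted by summation by parts into second-difference form plus absolutely convergent remainders, with $(S^2-1)\alpha\in\ell^2$ the exact hypothesis that permits this. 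The dependence of the argument on that hypothesis, and on $\alpha\in\ell^6$ to absorb the top-degree terms, is what restricts the result to the stated special case.
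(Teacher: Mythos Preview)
Your plan is correct and is essentially the paper's proof: expand $Z(\mu)$ through the first four moments of $\log w$, then regroup the resulting series into $\ell^1$ pieces (controlled via H\"older from $(S^2-1)\alpha\in\ell^2$ and $\alpha\in\ell^6$), a telescoping piece (your summation by parts), and the single sign-definite quartic $-\tfrac1{32}\sum_k\lvert\alpha_k-2\alpha_{k-1}+\alpha_{k-2}\rvert^4$, which decides finiteness. The one imprecision is the extension step: semicontinuity of the entropy yields only one inequality, whereas the ``only if'' direction needs the other, so one actually uses the full equality $Z(\mu)=\lim_n Z(\mu_n)$ for Bernstein--Szeg\H o approximants, which the paper quotes from Golinskii--Zlato\v s.
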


In particular, Theorem~\ref{T1.3} disproves Conjecture~\ref{C1.1}:

\begin{cor}
Let the Verblunsky coefficients of the measure $\mu$ be given by
\begin{equation}
\alpha_n = \frac {1+(-1)^n}{3 (n+1)^{1/4}}.
\end{equation}
Then $(S-1)^2(S+1)\alpha \in \ell^2$ and $\alpha \in \ell^6$, but
\begin{equation}\label{1.11}
\int  (1-\cos\theta)^2 (1+\cos\theta) \log w(\theta) \frac{d\theta}{2\pi} = -\infty.
\end{equation}
\end{cor}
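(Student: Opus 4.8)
The plan is to reduce everything to Theorem~\ref{T1.3}. That theorem asserts that, under the hypotheses $(S-1)(S+1)\alpha\in\ell^2$ and $\alpha\in\ell^6$, condition \eqref{1.8} is \emph{equivalent} to $(S-1)^2\alpha\in\ell^4$. So once I verify these two hypotheses for the given sequence and separately show that $(S-1)^2\alpha\notin\ell^4$, the equivalence forces \eqref{1.8} to fail, i.e. the integral equals $-\infty$, which is \eqref{1.11}. Everything then comes down to tracking how the shift operators act on the even/odd support pattern of $\alpha$: writing $\alpha_n=\tfrac23(n+1)^{-1/4}$ for even $n$ and $\alpha_n=0$ for odd $n$, I would first dispose of the convergence statements and then isolate the divergence as the real content.

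For the convergent parts: $\alpha\in\ell^6$ is immediate, since only even indices contribute and $\sum_{n\text{ even}}(\tfrac23)^6(n+1)^{-3/2}<\infty$. For $(S-1)(S+1)\alpha=(S^2-1)\alpha$, I would observe that on even indices it equals $\tfrac23\bigl[(n+3)^{-1/4}-(n+1)^{-1/4}\bigr]$, a genuine first difference of $(n+1)^{-1/4}$, which the mean value theorem bounds by $O(n^{-5/4})$, while on odd indices it vanishes; since $\sum n^{-5/2}<\infty$ this lies in $\ell^2$, which is the hypothesis of Theorem~\ref{T1.3}. The same $O(n^{-5/4})$ decay survives one more application of $S-1$, which merely relocates the nonzero entries between the two sublattices, so that $(S-1)^2(S+1)\alpha\in\ell^2$ as well, as claimed in the statement.

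The heart of the argument is the failure $(S-1)^2\alpha\notin\ell^4$, and here the parity structure works against us rather than for us. On even indices $((S-1)^2\alpha)_n=\alpha_{n+2}-2\alpha_{n+1}+\alpha_n=\alpha_{n+2}+\alpha_n$, because the middle term sits at an odd index and vanishes; this is a \emph{sum} rather than a difference, of size $\tfrac43(n+1)^{-1/4}$ up to lower order, with no cancellation. On odd indices $((S-1)^2\alpha)_n=-2\alpha_{n+1}$, again of size a constant times $(n+1)^{-1/4}$. Hence $\lvert((S-1)^2\alpha)_n\rvert\ge c\,(n+1)^{-1/4}$ for all large $n$ and some $c>0$, so $\sum_n\lvert((S-1)^2\alpha)_n\rvert^4\ge c^4\sum_n(n+1)^{-1}=\infty$.

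Combining the three facts $(S-1)(S+1)\alpha\in\ell^2$, $\alpha\in\ell^6$, and $(S-1)^2\alpha\notin\ell^4$ and applying Theorem~\ref{T1.3} yields \eqref{1.11}. I do not expect a genuine obstacle; the only conceptual point, which the crux computation makes transparent, is that the factor $S+1$ pairs with one factor $S-1$ to form $S^2-1$, an operator that respects the even/odd decomposition and therefore differences genuinely, producing the extra power of decay. Dropping $S+1$ leaves $(S-1)^2$, whose cross term strikes the vanishing (odd) entries and destroys the cancellation, reducing the decay back to $n^{-1/4}$. This is exactly the discrepancy that makes $\alpha$ a counterexample to Conjecture~\ref{C1.1}.
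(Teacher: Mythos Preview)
Your proposal is correct and follows exactly the same approach as the paper: verify $\alpha\in\ell^6$ and $(S-1)(S+1)\alpha\in\ell^2$ (the latter yielding $(S-1)^2(S+1)\alpha\in\ell^2$ automatically), check that $(S-1)^2\alpha\notin\ell^4$ via the parity structure, and invoke Theorem~\ref{T1.3}. You have simply fleshed out the details that the paper leaves as ``straightforward to verify.''
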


\begin{proof}
It is straightforward to verify $\alpha\in \ell^6$ and $(S-1)(S+1)\alpha \in \ell^2$; the latter also implies $(S-1)^2(S+1)\alpha \in \ell^2$. However, $(S-1)^2\alpha \notin \ell^4$, so Theorem~\ref{T1.3} implies \eqref{1.11}.
\end{proof}

Necessary and sufficient conditions for \eqref{1.2} in terms of Verblunsky coefficients have been proved by Denisov--Kupin~\cite{DenisovKupin06}, following work of Nazarov--Peher\-storfer--Volberg--Yuditskii~\cite{NazarovPeherstorferVolbergYuditskii05} for Jacobi matrices. However, these conditions are in a more complicated form which hasn't been successfully related to conditions such as those discussed here. For Jacobi matrices, analogs of the cases $\sum_{k=1}^l m_k = 1,2$ have been proved by Laptev--Naboko--Safronov~\cite{LaptevNabokoSafronov03} and Kupin~\cite{Kupin04}.

We end on a pessimistic note: even though all the existing results, including Theorem~\ref{T1.3}, are compatible with Conjecture~\ref{C1.2}, we are not confident that it is true, either. In \cite{Lukic1}, we analyzed Verblunsky coefficients of the form \eqref{1.4}, with \eqref{1.5} replaced by the stronger condition $(S-e^{-i\theta_k})\beta^{(k)} \in \ell^1$, and with $\alpha \in \ell^p$ for some $p<\infty$. There, the measure is purely absolutely continuous except on an explicit finite set of points. However, this set of possible pure points increases with increasing $p$, and can contain points not in $\{e^{i\theta_k} \mid k=1,\dots,K\}$ if $p>3$. The possibility of these points was shown by Kr\"uger \cite{Kruger12} and Lukic \cite{Lukic5}. If the analogous phenomenon is true here, it would mean that for large enough $m_k$, the measure may have points outside of $\{e^{i\theta_k} \mid k=1,\dots,K\}$ where $\log w$ is not locally $L^1$, so \eqref{1.2} would be false. 

\section{Decomposition}\label{S2}

If $P_1, \dots, P_l \in \mathbb{C}[x]$ are pairwise coprime, then there exist polynomials $U_1, \dots, U_l \in \mathbb{C}[x]$ such that
\begin{equation}\label{2.1}
\sum_{j=1}^l U_j \prod_{i\neq j} P_i = 1.
\end{equation}
This is easily proved by induction on $l$, since $\mathbb{C}[x]$ is a principal ideal domain. 

\begin{thm}\label{T2.1}
Fix $2\le  p<\infty$. Let $P_1,\dots, P_l \in \mathbb{C}[x]$ be pairwise coprime and $U_1,\dots, U_l \in \mathbb{C}[x]$ be such that \eqref{2.1} holds. Then the following are equivalent:
\begin{enumerate}[(i)]
\item $\alpha \in \ell^p$ and $P_1(S) \cdots P_l(S) \alpha \in \ell^2$;
\item if we define $\beta^{(j)} = U_j(S) \prod_{i\neq j} P_i(S) \alpha$ for $j=1,\dots,l$, then $\beta^{(j)} \in \ell^p$, $P_j(S) \beta^{(j)} \in \ell^2$ and
\begin{equation}\label{2.2}
\alpha = \beta^{(1)}+ \dots + \beta^{(l)}
\end{equation}
\item there exist sequences $\beta^{(1)}, \dots, \beta^{(l)} \in \ell^p$ such that \eqref{2.2} holds and that $P_j(S) \beta^{(j)} \in \ell^2$ for $j=1,\dots,l$.
\end{enumerate}
\end{thm}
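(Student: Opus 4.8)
The plan is to prove the cyclic chain (i) $\Rightarrow$ (ii) $\Rightarrow$ (iii) $\Rightarrow$ (i), which yields all three equivalences at once. The only analytic input is that the shift is a contraction on every $\ell^q$: from $(Sx)_n = x_{n+1}$ one reads off $\|Sx\|_{\ell^q}\le\|x\|_{\ell^q}$, so for any polynomial $Q$ the operator $Q(S)=\sum_k c_k S^k$ is bounded on $\ell^q$ with norm at most $\sum_k|c_k|$. Thus applying a polynomial in $S$ preserves membership in any $\ell^q$, and since the operators $P_i(S)$ all commute, I may factor and regroup the product $\prod_i P_i(S)$ freely.

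For (i) $\Rightarrow$ (ii) I define the $\beta^{(j)}$ by the stated formula. Membership $\beta^{(j)}\in\ell^p$ is immediate from $\alpha\in\ell^p$ and the boundedness of $U_j(S)\prod_{i\neq j}P_i(S)$ on $\ell^p$. Commuting factors gives $P_j(S)\beta^{(j)}=U_j(S)\bigl(\prod_i P_i(S)\bigr)\alpha$, which lies in $\ell^2$ because $\prod_i P_i(S)\alpha\in\ell^2$ by hypothesis and $U_j(S)$ is bounded on $\ell^2$. The identity $\alpha=\sum_j\beta^{(j)}$ is precisely \eqref{2.1}, with $x$ replaced by $S$, applied to $\alpha$.

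The step (ii) $\Rightarrow$ (iii) is immediate, since the sequences constructed in (ii) are themselves witnesses for the existential statement (iii). For (iii) $\Rightarrow$ (i), an arbitrary decomposition $\alpha=\sum_j\beta^{(j)}$ with each $\beta^{(j)}\in\ell^p$ gives $\alpha\in\ell^p$ by the triangle inequality. For the joint bound I expand
\[
\prod_i P_i(S)\,\alpha=\sum_j\Bigl(\prod_{i\neq j}P_i(S)\Bigr)\bigl(P_j(S)\beta^{(j)}\bigr),
\]
using commutativity to isolate the factor $P_j(S)$ acting on $\beta^{(j)}$; each summand is a bounded operator on $\ell^2$ applied to the $\ell^2$ sequence $P_j(S)\beta^{(j)}$, so the finite sum lies in $\ell^2$.

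I do not expect a genuine obstacle here: every implication collapses to the boundedness of polynomials in $S$ together with the algebraic identity \eqref{2.1}. The only point needing care is keeping track of which space each operator acts on---$\ell^p$ when following $\alpha$ and the $\beta^{(j)}$, and $\ell^2$ when following the images $P_j(S)\beta^{(j)}$---but because $S$ contracts all $\ell^q$ simultaneously, no estimate beyond these uniform bounds is required.
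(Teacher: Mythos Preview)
Your proof is correct and follows essentially the same approach as the paper: both prove the cyclic implications (i)$\Rightarrow$(ii)$\Rightarrow$(iii)$\Rightarrow$(i), using only that $Q(S)$ preserves each $\ell^q$, commutativity of the factors, and the identity \eqref{2.1} applied to $\alpha$. Your version is slightly more explicit in justifying the boundedness of $Q(S)$ via the contraction property of $S$, but the argument is otherwise identical.
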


\begin{proof} This proof repeatedly uses the following obvious fact: if $\gamma \in \ell^n$, then $Q(S) \gamma \in \ell^n$, for an arbitrary polynomial $Q\in \mathbb{C}[x]$.

(i) implies (ii): $P_j(S) \beta^{(j)} = U_j(S) P_1(S) \cdots P_l(S) \alpha \in \ell^2$, and $\alpha \in \ell^p$ implies $\beta^{(j)} \in \ell^p$. Finally, \eqref{2.1} implies \eqref{2.2}.

(ii) implies (iii) trivially.

(iii) implies (i): since $\beta^{(1)}, \dots,\beta^{(l)}$ are in $\ell^p$, so is their sum $\alpha$. Further, $P_j(S) \beta^{(j)} \in \ell^2$ implies $P_1(S) \cdots P_l(S)  \beta^{(j)} \in \ell^2$, and  summing in $j$, $P_1(S) \cdots P_l(S)  \alpha \in \ell^2$.
\end{proof}

For example, in the case considered by Theorem~\ref{T1.3}, take $P_1(z) = (z-1)^2$, $P_2(z) = z+1$. Note that
\[
\tfrac 14 (z-1)^2 - \tfrac 14 (z-3)(z+1) =1,
\]
so take $U_1(z)=-\tfrac 14(z-3)$, $U_2(z) = \tfrac 14$. Then
\[
\beta^{(1)} = - \frac 14 (S-3)(S+1) \alpha, \qquad \beta^{(2)} = \frac 14 (S-1)^2 \alpha.
\]
Notice that $(S-1)^2(S+1) \alpha \in \ell^2$ is equivalent to $(S-1)^2 \beta^{(1)}, (S+1)\beta^{(2)} \in \ell^2$. Further, $\beta^{(1)} \in \ell^6$ and $\beta^{(2)} \in \ell^4$ is equivalent to $\alpha \in \ell^6$ and $\beta^{(2)} \in \ell^4$. Thus, Conjecture~\ref{C1.2} predicts that \eqref{1.8} is equivalent to \eqref{1.9}.

\section{Proof of Theorem~\ref{T1.3}}\label{S3}

Our proof of Theorem~\ref{T1.3} follows the method of Simon~\cite[Section~2.8]{OPUC1}, Simon--Zlato\v s~\cite{SimonZlatos05} and Golinskii--Zlato\v s~\cite{GolinskiiZlatos07}. Define
\begin{equation}\label{3.1}
Z(\mu) = \int (1-\cos \theta)^2(1+\cos\theta) \log w(\theta) \frac{d\theta}{2\pi}.
\end{equation}
Let us assume for a moment that $\alpha \in \ell^2$. Then $\log w \in L^1$ by Szeg\H o's theorem; denote by $w_m$ the moments of $\log w(\theta)$,
\[
w_m = \int e^{-im\theta} \log w(\theta) \frac{d\theta}{2\pi},
\]
noting that $w_{-m} = \bar w_m$. The first few moments are computed in \cite{GolinskiiZlatos07},
\begin{align*}
w_0 & =  \sum_k \log \rho_k^2 \\
w_1 & =  - \sum_k \alpha_k \bar\alpha_{k-1} \\
w_2 & = \sum_k \bigl( -\alpha_k \bar\alpha_{k-2}\rho_{k-1}^2 + \tfrac 12 \alpha_k^2 \bar \alpha_{k-1}^2 \bigr) \\
w_3 & = \sum_k \bigl( -\alpha_k \bar\alpha_{k-3}\rho_{k-1}^2 \rho_{k-2}^2 + \alpha_k^2 \bar\alpha_{k-1} \bar\alpha_{k-2} \rho_{k-1}^2 + \alpha_k \alpha_{k-1} \bar\alpha_{k-2}^2 \rho_{k-1}^2 - \tfrac 13 \alpha_k^3 \bar \alpha_{k-1}^3 \bigr)
\end{align*}
This uses the convention $\alpha_{-1}=-1$ and $\alpha_k = 0$ for $k\le -2$, and $\rho_k = \sqrt{1-\lvert \alpha_k\rvert^2}$. Since
\[
(1-\cos \theta)^2(1+\cos\theta) = \tfrac 18 (4 - e^{i\theta} - e^{-i\theta} - 2 e^{2i\theta} - 2 e^{-2i\theta} + e^{3i\theta} + e^{-3i\theta}),
\]
\eqref{3.1} implies 
\begin{equation*}
Z(\mu) =  \tfrac 14 \Re ( 2 w_0 - w_1 - 2 w_2  + w_3  ).
\end{equation*}

Thus,  for measures with $\alpha\in \ell^2$,
\begin{align}
Z(\mu) & = \tfrac 14 \sum_k  \Re \Bigl( 2 \log \rho_k^2 + \alpha_k  \bar \alpha_{k-1} + 2 \alpha_{k} \bar \alpha_{k-2} \rho_{k-1}^2 - \alpha_{k}^2 \bar \alpha_{k-1}^2 - \alpha_{k} \bar \alpha_{k-3} \rho_{k-1}^2 \rho_{k-2}^2 \nonumber \\ 
& \qquad \qquad + \alpha_{k}^2 \bar\alpha_{k-1} \bar \alpha_{k-2} \rho_{k-1}^2 + \alpha_{k}\alpha_{k-1} \bar \alpha_{k-2}^2 \rho_{k-1}^2 - \tfrac 13  \alpha_{k}^3 \bar \alpha_{k-1}^3 \Bigr) \label{3.2}
\end{align}

Now let $\mu$ be arbitrary. Let $\mu_n$ be the measure with Verblunsky coefficients
\begin{equation*}
\alpha^{(n)} = (\alpha_0,\alpha_1,\dots,\alpha_{n-1},0,0, \dots).
\end{equation*}
The $\mu_n$ are known as Bernstein--Szeg\H o approximations of $\mu$, and $\mu_n$ converge weakly to $\mu$. It is proved in \cite{GolinskiiZlatos07} that 
\begin{equation*}
Z(\mu) = \lim_{n\to\infty} Z(\mu_n),
\end{equation*}
or equivalently, that the formula \eqref{3.2} holds for $\mu$ as well (since $Z(\mu_n)$ is just the partial sum for the right-hand side of \eqref{3.2}). 

To rewrite \eqref{3.2} in a more useful way, take
\begin{align*}
L_k & = 2\log(1-\lvert\alpha_k\rvert^2) + 2 \lvert\alpha_k\rvert^2 + \lvert\alpha_k\rvert^4 \\
E_k & =  - \tfrac 12 (1-\lvert \alpha_{k-1}\rvert^2 -\lvert \alpha_{k-2}\rvert^2) \lvert \alpha_{k} -\alpha_{k-1} - \alpha_{k-2} + \alpha_{k-3} \rvert^2 \\
G_k &  = - \tfrac 1{32} \lvert \alpha_{k} - 2 \alpha_{k-1} + \alpha_{k-2} \rvert^4 \\
J_k & =  \Bigl(  \tfrac 34 \alpha_k \bar\alpha_{k-1}\bar\alpha_{k-2} + \tfrac 54 \lvert \alpha_{k-2}\rvert^2 \bar\alpha_{k-1}  + \tfrac 98 \lvert\alpha_k\rvert^2 \bar\alpha_{k-2} + \tfrac 12 \bar\alpha_{k-1}^2   \alpha_{k-2}  + \bar\alpha_{k-2} \lvert\alpha_{k-1}\rvert^2  \\
& \qquad  + \tfrac {23}{16} \lvert \alpha_{k-2}\rvert^2 \bar \alpha_{k-2}  - \tfrac 54 \lvert\alpha_k\rvert^2  \bar\alpha_{k-1}   - \tfrac 34 \bar\alpha_k \bar \alpha_{k-1} \alpha_{k-2}  + \tfrac 1{16} \alpha_k \bar \alpha_{k-2}^2    + \tfrac 14 \lvert \alpha_{k-2}\rvert^2 \bar\alpha_k  \Bigr) \\
H_k &  = (\alpha_{k} - \alpha_{k-2}) J_k \\
F_k & =  - \alpha_k \bar\alpha_{k-3} \lvert \alpha_{k-1}\rvert^2 \lvert \alpha_{k-2}\rvert^2 - \alpha_k^2 \bar\alpha_{k-1} \bar\alpha_{k-2} \lvert \alpha_{k-1}\rvert^2  - \alpha_{k} \alpha_{k-1} \bar\alpha_{k-2}^2 \lvert\alpha_{k-1}\rvert^2 - \tfrac 13 \alpha_k^3 \bar\alpha_{k-1}^3 \\
I_k & = \Bigl( - \tfrac 32 \lvert \alpha_k\rvert^2 - \lvert \alpha_{k-1}\rvert^2 - \tfrac 12 \lvert \alpha_{k-2}\rvert^2 + \alpha_k \bar\alpha_{k-2} + \alpha_{k-1} \bar\alpha_{k-2} + \tfrac 12 \lvert\alpha_k\rvert^2 \lvert\alpha_{k-2}\rvert^2  \\
& \qquad - \tfrac{31}{32} \lvert\alpha_k\rvert^4 -\tfrac{31}{32} \lvert\alpha_{k-1}\rvert^4 - \tfrac 34 \alpha_k^2 \bar\alpha_{k-1}^2+ \lvert\alpha_k\rvert^2 \alpha_k \bar\alpha_{k-1} - \lvert\alpha_{k-1}\rvert^2 \alpha_{k-1} \bar\alpha_{k-2}\\
& \qquad  - \lvert\alpha_{k-1}\rvert^2 \alpha_k \bar\alpha_{k-2} - \lvert\alpha_k\rvert^2 \alpha_k \bar\alpha_{k-2} - \lvert\alpha_k\rvert^2 \alpha_{k-1}\bar\alpha_{k-2} + \tfrac 12 \lvert\alpha_{k-1}\rvert^2 \lvert\alpha_{k-2}\rvert^2 \Bigr)
\end{align*}

\begin{lemma}\label{L3.1} Let $\alpha \in \ell^6$ and $(S^2-1)\alpha \in \ell^2$. Then $\{L_k\}, \{E_k\}, \{H_k\}, \{F_k\} \in \ell^1$.
\end{lemma}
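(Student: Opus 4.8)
The plan is to bound each of the four sequences $\{L_k\},\{E_k\},\{H_k\},\{F_k\}$ termwise, using two elementary workhorses. The first is H\"older's inequality in the form: if $x^{(1)},\dots,x^{(m)}\in\ell^6$, then their pointwise product lies in $\ell^{6/m}$; since $\alpha\in\ell^6$ and shifting or conjugating preserves the $\ell^6$ norm, any sequence whose $k$-th entry is a product of $m$ Verblunsky coefficients (at shifted indices, possibly conjugated) lies in $\ell^{6/m}$. In particular a product of six factors is in $\ell^1$ and a product of three factors is in $\ell^2$. The second workhorse is the observation that, writing $\gamma=(S^2-1)\alpha\in\ell^2$, the second difference $\alpha_k-\alpha_{k-2}$ equals $\gamma_{k-2}$ and is therefore a single $\ell^2$ sequence in $k$. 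The strategy is then to classify each term by its degree in $\alpha$ and decide which tool applies.

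I would first dispatch $\{L_k\}$ and $\{F_k\}$, which need only $\alpha\in\ell^6$. For $F_k$, each of the four summands is manifestly a product of six Verblunsky coefficients, hence in $\ell^1$ by the $m=6$ case above. For $L_k$, I would Taylor expand: writing $x=\lvert\alpha_k\rvert^2$, the quadratic and quartic terms cancel, leaving $2\log(1-x)+2x+x^2=-\tfrac23 x^3-\tfrac12 x^4-\cdots=O(x^3)$; since $\alpha\in\ell^6$ forces $\alpha_k\to0$, this $O(x^3)=O(\lvert\alpha_k\rvert^6)$ bound is valid for all large $k$, while the finitely many remaining terms are finite, so $\{L_k\}\in\ell^1$.

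Next I would treat $\{E_k\}$, which is where the hypothesis $(S^2-1)\alpha\in\ell^2$ enters. The key manipulation is to recognize the combination inside the modulus as a difference of shifts of $\gamma$:
\[
\alpha_k-\alpha_{k-1}-\alpha_{k-2}+\alpha_{k-3}=(\alpha_k-\alpha_{k-2})-(\alpha_{k-1}-\alpha_{k-3})=\gamma_{k-2}-\gamma_{k-3}.
\]
This is an $\ell^2$ sequence in $k$, so its modulus squared lies in $\ell^1$; as the prefactor $-\tfrac12(1-\lvert\alpha_{k-1}\rvert^2-\lvert\alpha_{k-2}\rvert^2)$ is bounded (because $\lvert\alpha_j\rvert<1$), it follows that $\{E_k\}\in\ell^1$.

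The main obstacle is $\{H_k\}$, with $H_k=(\alpha_k-\alpha_{k-2})J_k$ quartic in $\alpha$; a naive degree count shows that $\alpha\in\ell^6$ alone only places it in $\ell^{3/2}$, so both hypotheses must be used together. The plan is to exploit the explicit factor $\alpha_k-\alpha_{k-2}=\gamma_{k-2}\in\ell^2$ and to check that the remaining factor $J_k$, every summand of which is a product of exactly three Verblunsky coefficients, lies in $\ell^2$ by the $m=3$ H\"older estimate. Cauchy--Schwarz then gives $\sum_k\lvert H_k\rvert=\sum_k\lvert\gamma_{k-2}\rvert\,\lvert J_k\rvert\le\lVert\gamma\rVert_2\lVert J\rVert_2<\infty$, so $\{H_k\}\in\ell^1$. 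The only delicate point throughout is this pairing: the degree-four term is summable precisely because its prefactor carries the second difference that the hypothesis controls, while the leftover cube is absorbed by $\ell^6$.
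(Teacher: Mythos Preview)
Your proof is correct and follows essentially the same approach as the paper: you use the Taylor remainder estimate for $L_k$, the factorization $(S-1)(S^2-1)\alpha\in\ell^2$ for $E_k$, the H\"older/Cauchy--Schwarz pairing of $\gamma\in\ell^2$ with $J\in\ell^2$ for $H_k$, and the degree-six bound for $F_k$. The only difference is presentational---you spell out the H\"older and Cauchy--Schwarz steps that the paper leaves implicit.
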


\begin{proof}
$(S^2-1)\alpha\in \ell^2$ implies $(S^3-S^2-S+1)\alpha = (S-1)(S^2-1)\alpha \in \ell^2$. Thus, $\{E_k\}\in \ell^1$, since $\lvert\alpha_k \rvert < 1$ for all $k$.

$\alpha \in \ell^6$ implies $\{J_k\} \in \ell^2$, so together with $(S^2-1)\alpha \in \ell^2$ it implies $\{H_k\}\in\ell^1$. $\alpha \in \ell^6$ also implies $\{F_k\} \in \ell^1$.

$\alpha \in \ell^6$ implies that $\lvert\alpha_k\rvert < \tfrac 12$ for all but finitely many $k$. For $z\in [0,\tfrac 14]$, we have the uniform estimate
\[
\left\lvert \log(1-z) + z + \tfrac 12 z^2 \right\rvert \le C z^3
\]
for some finite $C$. Take $z=\lvert \alpha_k\rvert^2$  to conclude that $\lvert L_k \rvert \le 2 C \lvert \alpha_k\rvert^6$ for all but finitely many $k$; thus, $\alpha \in \ell^6$ implies $\{L_k\} \in \ell^1$.
\end{proof}

A straightforward calculation shows that
\begin{equation}\label{3.3}
Z(\mu)  = \tfrac 14 \sum_k \Re \Bigl(  L_k + E_k + G_k + H_k  + F_k + I_k - I_{k-1} \Bigr).
\end{equation}
However, $\sum_k (I_k-I_{k-1})=0$, since it is a telescoping sum and $\lim_{k\to\pm \infty} I_k = 0$.
By \eqref{3.3} and Lemma~\ref{L3.1}, if $\alpha\in \ell^6$ and $(S^2-1)\alpha \in \ell^2$, then
\[
Z(\mu) = C + \tfrac 14 \sum_k \Re G_k,
\]
where $C = \tfrac 14 \sum_k (L_k + E_k + H_k + F_k)$ is finite. Since $G_k \le 0$, we conclude that $Z(\mu) > -\infty$ is equivalent to $\sum_k G_k > -\infty$, i.e., to $(S^2-2S+1)\alpha \in \ell^4$. This completes the proof of Theorem~\ref{T1.3}.

\bibliographystyle{amsplain}

\providecommand{\bysame}{\leavevmode\hbox to3em{\hrulefill}\thinspace}
\providecommand{\MR}{\relax\ifhmode\unskip\space\fi MR }
\providecommand{\MRhref}[2]{%
  \href{http://www.ams.org/mathscinet-getitem?mr=#1}{#2}
}
\providecommand{\href}[2]{#2}

\end{document}